\documentclass{article}


\usepackage[preprint]{neurips_2021}




\usepackage[utf8]{inputenc} 
\usepackage[T1]{fontenc}    
\usepackage{hyperref}       
\usepackage{url}            
\usepackage{booktabs}       
\usepackage{amsfonts}       
\usepackage{nicefrac}       
\usepackage{microtype}      
\usepackage{xcolor}         

\usepackage{amsthm}
\usepackage{amsmath}
\usepackage{amssymb}
\newcommand{\bedit}[1]{{\color{black}#1}}

\newtheorem{proposition}{Proposition}
\newtheorem{theorem}{Theorem}

\theoremstyle{definition}
\newtheorem{definition}{Definition}
\newtheorem{remark}{Remark}

\DeclareMathOperator{\vspan}{span}


\title{Occupation Kernel Hilbert Spaces for Fractional Order Liouville Operators and Dynamic Mode Decomposition}

%

\author{%
  Joel A. Rosenfeld \\
  Department of Mathematics and Statistics\\
  University of South Florida\\
  Tampa, Fl 33620 \\
  \texttt{rosenfeldj@usf.edu} \\
  \And
  Benjamin P. Russo \\
  Department of Mathematics\\
  Farmingdale State College\\
  Farmingdale, NY 11735 USA \\
  \texttt{russobp@farmingdale.edu} \\
  \And
  Xiuying Li \\
  Changshu Institute of Technology\\
  Suzhou, Jiangsu 215500, P R China \\
  \texttt{xyli1112@sina.com} \\
  }

\begin{document}

\maketitle

\begin{abstract}
  This manuscript gives a theoretical framework for a new Hilbert space of functions, the so called occupation kernel Hilbert space (OKHS), that operate on collections of signals rather than real or complex numbers. To support this new definition, an explicit class of OKHSs is given through the consideration of a reproducing kernel Hilbert space (RKHS). This space enables the definition of nonlocal operators, such as fractional order Liouville operators, as well as spectral decomposition methods for corresponding fractional order dynamical systems. In this manuscript, a fractional order DMD routine is presented, and the details of the finite rank representations are given. Significantly, despite the added theoretical content through the OKHS formulation, the resultant computations only differ slightly from that of occupation kernel DMD methods for integer order systems posed over RKHSs.
\end{abstract}

\section{Introduction}\label{sec:introduction}

Despite the proliferation of numerical methods for and applications of fractional order and nonlocal dynamical systems over the past twenty years (cf. \citep{rosenfeld2017approximating,diethelm2010analysis,pang2019fpinns,magin2006fractional}), there are several long standing problems in the modeling of nonlocal dynamical systems and system identification techniques for nonlinear fractional order systems (cf. \citep{Das2012,rahmani2018nonlinear,dorcak2013identification}). Principle among these problems is the effective representation of data for modeling in nonlocal nonlinear systems. This manuscript enables several novel data driven approaches to modeling nonlocal dynamical systems, including a nonlocal variant of Dynamic Mode Decomposition (DMD), by addressing the problem of data representation for nonlinear time-fractional order dynamical systems. In particular, this work builds on work involving occupation kernels, which embeds signal information into a function within a reproducing kernel Hilbert space (RKHS) and thus positions signal data as the fundamental unit of information of a dynamical system (cf. \citep{rosenfeld2019occupation,rosenfeldCDC2019,rosenfeldDMD}), by extending the idea beyond merely functions inside of a space, but by generating Hilbert spaces of functions on collections of signals based on the occupation kernels themselves.

Fractional order dynamical systems have been applied broadly in nearly every scientific discipline. For example, applications include two-phase flows \citep{xie2017direct,zheng2016phase}, turbulence modeling \citep{song2018universal}, and stochastic systems (e.g. \citep{du2019stochastic}). Other research has exploited the memory capabilities of fractional order operators in the context of materials, where classically they were used to model visco-elastic materials \citep{mainardi2010fractional} and more recently biological tissues \citep{magin2012fractional}, and fractional order methods have also been employed in biological applications for guidance in sensor placement \citep{tzoumas2018selecting}. Extensions of control theory have been realized through fractional order PID controllers \citep{podlubny1994fractional}, which have seen applications to problems in aerospace and control of flexible systems \citep{muresan2018novel,sirota2015fractional}.

Despite the wide application of fractional order systems, given a real world system, the development of a corresponding fractional order model is nontrivial. In the absence of first principles through which a model may be realized, many applications of fractional order dynamical systems leverage fractional operators in an unmotivated way. This manuscript provides a new Hilbert space setting for the modeling of nonlinear fractional order systems, which will provide a data driven motivation for the implementation of fractional order operators and models.

System identification and learning for fractional order systems has been largely constrained to linear fractional order systems. For example, \citep{rahmani2018nonlinear} developed a system identification routine through the Laplace transform, while \citep{gulian2019machine} gives a learning method for a collection \emph{linear} fractional order PDEs. A notable exception is fPiNNs (cf.  \citep{mehta2019discovering} and \citep{pang2019fpinns}), which uses physics informed neural network to provide a model of a fractional order PDE, and also that of the authors which leveraged a regression approach with occupation kernels \citep{li2020fractional}. The methods herein differ from that of fPiNNs in that they are operator based (rather than neural network based) approaches to system identification and modeling, and constitute a contribution in a new direction for both data driven modeling for fractional order system and for DMD analysis techniques as a whole.

One of the hurtles that have prevented the development of system identification for nonlocal nonlinear systems is a matter of data representation, which is resolved in the sequel by the invocation of occupation kernels. This builds on the work in \citep{li2020fractional} in that some of the occupation kernels take the same form as in \citep{li2020fractional}. The objective of the work of \citep{li2020fractional} was to obtain an approximation of the dynamics themselves. The principle difference is the development of nonlocal operators and a new Hilbert space framework, which allows for spectral decompositions of these operators for expressing a model for the system.

Section \ref{sec:okhs} introduces occupation kernel Hilbert spaces (OKHSs). The construction presented here is for the particular case of the Caputo fractional derivative, but is immediately generalizable to a wide range of time fractional operators where initial value problems may be resolved using Voltera integral equations. The development of OKHSs enables the generalization of a key operator used in the study of nonlinear dynamical systems, namely the Liouville operator. Some Liouville operators are continuous generators of semigroups of Koopman operators \citep{cvitanovic2005chaos,das2020koopman,froyland2014detecting,giannakis2019data,giannakis2020extraction,giannakis2018koopman}, which are the pivotal tools in the study of nonlinear dynamical systems of integer order.

\subsection{Overview of Dynamic Mode Decomposition Methods}

DMD emerged as an effective data-driven method of learning dynamical systems from trajectory data with no prior knowledge. The DMD method aims to analyze finite dimensional nonlinear dynamical systems as operators over infinite dimensional Hilbert spaces, where the tools of traditional linear systems may be employed to make predictions about nonlinear systems from captured trajectory data (or snapshots). Though based on early work by Koopman and Von Neumann in the 1930s, DMD more recently came to prominence as a method of identifying underlying governing principles of nonlinear fluid flows (cf. \citep{budivsic2012applied,kutz2016dynamic,mezic2005spectral,mezic2013analysis,williams2015data}), which compared favorably with principle orthogonal decomposition (POD) analyses.

Underlying classical DMD methods are Koopman operators, which are operators over function spaces that represent discrete time dynamics \citep{kutz2016dynamic}. That is, given a discrete dynamical system, $x_{i+1} = F(x_i)$, and a Hilbert space of functions, $H$, the corresponding Koopman operator, $\mathcal{K}_F : \mathcal{D}(\mathcal{K}_F) \to H$ is given as $\mathcal{K}_F g = g\circ F$ for all $g \in \mathcal{D}(\mathcal{K}_F) \subset H$, where $\mathcal{D}(\mathcal{K}_F)$ is a given subset of $H$ corresponding to the domain of $\mathcal{K}_F$. For a collection of snapshots of a dynamical system, $x_1, x_2,\ldots, x_m$, the dynamics are then represented through observables pulled from $H$, as $g(x_1), g(x_2),\ldots, g(x_m)$, where $g(x_{i+1}) = g(F(x_i)) = \mathcal{K}_F g(x_i)$ \citep{kutz2016dynamic,williams2015data}. When $H$ is a reproducing kernel Hilbert space (RKHS) over $\mathbb{R}^n$ with kernel function $K:\mathbb{R}^n \times \mathbb{R}^n \to \mathbb{R}$ (the definition of which is given in the Technical Background below), then this action on the observables may be expressed as \[ \langle g, K(\cdot,x_{i+1}) \rangle_H = g(x_{i+1}) = \mathcal{K}_Fg(x_i) = \langle \mathcal{K}_F g, K(\cdot, x_i) \rangle_H = \langle g, \mathcal{K}_F^* K(\cdot,x_i) \rangle_H. \]

Hence, discrete time dynamics may be captured through the adjoint of the Koopman operator acting on the kernel functions of a RKHS.\footnote{In the context of function theoretic operator theory, the function $F$, which defines the discrete time dynamics, is called the \emph{symbol} of the Koopman operator. This terminology is common among many other such operators over RKHSs (cf. \citep{rosenfeld2016sarason,rosenfeld2015introducing,sarason2008unboundedtoeplitz})} This connection between Koopman operators and RKHSs is the core observation of kernel-based extended DMD presented in \citep{williams2015kernel, korda2018convergence}. 


The limiting perspective using Koopman operators has several major theoretical drawbacks; the most significant and fundamental is its restriction to dynamical systems that admit a discretization. The symbol (or discrete dynamics) of a Koopman operator must be defined over all of the state space for the Koopman operator to be well defined. The consideration of the dynamics $\dot x = 1 + x^2$ yields a discretization of $x_{m+1} = \tan(\Delta t + \arctan(x_{m}))$, where it can be seen that the selection of $x_m = \tan(\pi/2 - \Delta t)$ gives an undefined value for $x_{m+1}$. Hence, many polynomial systems do not fall under the purview of Koopman based techniques, and the particular requirement that guarantees the existence of a discretization of a dynamical system is \emph{forward completeness} (cf. \citep[Chapter 11]{krstic2009delay}) which is usually established by demonstrating that a dynamical system is globally Lipschitz continuous (cf. \citep{coddington1955theory} and \citep[Theorem 3.2]{khalil2002nonlinear}).

To address the above limitation of the Koopman perspective, \citep{rosenfeldDMD} introduced a combination of Liouville operators and occupation kernels to determine a finite rank representation of Liouville operators for spectral analysis. This allows for the analysis of dynamical systems that do not admit discretizations, and as such, this allows for the analysis of Liouville operators that were not also Koopman generators. Moreover, the introduction of occupation kernels and scaled Liouville operators in \citep{rosenfeldDMD} over the Bargmann-Fock space yielded a compact operator for a wide range of dynamics, and scaled Liouville operators allows for norm convergence of finite rank approximations used in DMD procedures. The main thrust of this manuscript is to provide a framework for the transportation of these tools to (time) fractional order dynamical systems, where data driven modeling techniques using DMD procedures could not be previously applied.

\section{Motivation for the present approach}

Suppose that for $0 < q < 1$ the Caputo fractional derivative is given as\footnote{Here $C_q := 1/\Gamma(q)$ is used to avoid confusion with the occupation kernels.} $D_*^q x(t) = C_{1-q} \int_0^t (t-\tau)^{-q} \dot x(\tau)\, d\tau$, and let $D_*^q x(t) = f(x(t))$ be a nonlinear fractional order system with the initial condition $x(0) = x_0 \in \mathbb{R}^n$. The Riemann-Liouville fractional integral is given as $J^q x(t) = C_{q} \int_0^t (t-\tau)^{q-1} x(\tau)\, d\tau,$ and the initial value problem may be resolved as $x(t) = x_0 + J^{q} D_*^q x(t) = x_0 + J^q f(x(t))$ (cf. \citep{diethelm2010analysis,gorenflo1997fractional}). Hence, $\dot x(t) = \frac{d}{dt} J^q f(x(t)) = D^{1-q} f(x(t))$, where $D^q$ is the Riemann-Liouville fractional derivative operator (cf. \citep{diethelm2010analysis,gorenflo1997fractional,oldham1974fractional}). Exploiting this core idea, generalizations of Liouville operators and occupation kernels may be defined.

Specifically, the variant of the Liouville operator explored in this manuscript is the \emph{fractional} Liouville operator of order $q$ given as $\mathcal{A}_{f,q} g = \bedit{\mathcal{A}}_{f,q} g[\gamma](T_\gamma) = \frac{1}{\Gamma(q)} \int_0^{T_\gamma} (T_\gamma - t)^{-q} \nabla g(\gamma(t)) D^{1-q} f(\gamma(t)) dt$  where $g$ is a function on a collection of signals in a Hilbert space that will be defined in Section \ref{sec:okhs}. If the fractional Liouville operator was poised over a RKHS consisting of functions over $\mathbb{R}^n$, then the image of $\mathcal{A}_{f,q}g$ would be a function of $\mathbb{R}^n$, which would mean that for any $x \in \mathbb{R}^n$, the quantity $\mathcal{A}_{f,q} g(x)$ would have to be well defined. However, as a state carries no ``history,'' the term $D^{q-1} f(x)$ is not well defined. This difficulty prevents the straightforward generalization of Liouville operators for integer order dynamical systems to that of fractional order dynamical systems. Section \ref{sec:okhs} of this manuscript provides a resolution to this problem through the construction of a Hilbert space consisting of functions over signals. Thus, in the sequel, the domain of $\mathcal{A}_{f,q} g$ is a collection of signals, and for a given continuously differentiable signal $\theta: [0,T] \to \mathbb{R}^n$, the quantity $D^{q-1} f(\theta(t))$ is well defined. The astute reader will ask how a gradient of a functions, $g$, over a collection of signal is defined and this will be quantified more precisely in Section \ref{sec:okhs}.

To accommodate the nonlocal requirements of the fractional Liouville operator, Section \ref{sec:okhs} constructs a Hilbert space from a RKHS, $H_{RKHS}$. Specifically, for any function, $g$, in a continuously differentiable RKHS over $R^n$ a mapping over collection of signals arises naturally via $\phi_g[\theta](t) := g(\theta(t))$. Hence, $\phi_g$ maps the signal $\theta$ to the signal $g(\theta(\cdot))$. In combination with the canonical identification, $g \mapsto \phi_g$, and the inner product of the RKHS, $\langle \cdot, \cdot \rangle_{H_{RKHS}}$, an inner product on the vector space  $H_{OKHS}^\circ := \{ \phi_g : g \in H_{RKHS}\}$ may be established, and the resultant Hilbert space will be called an \textit{occupation kernel Hilbert space} (OKHS).  The name for this Hilbert space arises from occupation kernels, which will play a key role in the analysis of nonlinear dynamical systems of fractional order, just as they have for integer order systems (cf. \citep{rosenfeld2019occupation}). For a given signal, $\theta: [0,T] \to \mathbb{R}^n$, the occupation kernel of order $q > 0$ is given as $\Gamma_{\theta,q}(x) := C_q \int_0^T (T-\tau)^{q-1} K(x,\theta(\tau))\, d\tau$. Occupation kernels represent integration after composition of the signal with a function in a RKHS, $\langle g, \Gamma_{\theta,q} \rangle = C_q \int_0^T (T-\tau)^{q-1} g(\theta(\tau))\, d\tau$.

The introduction of OKHSs and associated Liouville operators directly addresses the problem of data integration. That is, OKHSs directly incorporate trajectory data in a kernel function contained in the Hilbert space. The definition of the Hilbert space, which consists of functions on \emph{signals}, allows for the definition of nonlocal Liouville operators. In turn, these nonlocal Liouville operators allow for the development of DMD procedures on nonlinear fractional order systems.

\section{Prerequisites}

\subsection{Reproducing Kernel Hilbert Spaces}
A (real-valued) reproducing kernel Hilbert space (RKHS) is a Hilbert space, $H$, of real valued functions over a set $X$ such that for all $x \in X$ the evaluation functional $E_x g := g(x)$ is bounded (cf. \citep{paulsen2016RKHS,steinwart2008support,wahba1999support,aronszajn1950theory,bishop2006pattern,rosenfeld2015densely,rosenfeld2018mittag}). As such, the Riesz representation theorem guarantees, for all $x \in X$, the existence of a function $k_x \in H$ such that $\langle g, k_x \rangle_H = g(x)$, where $\langle \cdot, \cdot \rangle_H$ is the inner product for $H$. The function $k_x$ is called the reproducing kernel function at $x$, and the function $K(x,y) = \langle k_y, k_x \rangle_H$ is called the kernel function corresponding to $H$.

To establish a connection between RKHSs and nonlinear dynamical systems the following operator was introduced, which was inspired by the study of occupation measures and related concepts (cf. \citep{lasserre2008nonlinear,bhatt1996occupation,majumdar2014convex,henrion2008nonlinear,savorgnan2009discrete,hernandez2000classification,budivsic2012geometry,budivsic2012applied,cvitanovic1995dynamical,levnajic2010ergodic,levnajic2015ergodic,mezic2005spectral,mezic1999method,susuki2020invariant}).

\begin{definition}Let $\dot x = f(x)$ be a dynamical system with the dynamics, $f: \mathbb{R}^n \to \mathbb{R}^n$, Lipschitz continuous, and suppose that $H$ is a RKHS over a set $X$, where $X \subset \mathbb{R}^n$ is compact. The \emph{Liouville operator with symbol $f$}, $A_f : \mathcal{D}(A_f) \to H$, is given as $A_f g := \nabla_x g \cdot f,$ where $\mathcal{D}(A_f):= \left\{ g \in H : \nabla_x g \cdot f \in H \right\}.$ \end{definition}

As a differential operator, $A_f$ is not expected to be a bounded over many RKHSs. However, as differentiation is a closed operator over RKHSs consisting of continuously differentiable functions (cf. \citep{steinwart2008support,reed2012methods}), it can be similarly established that $A_f$ is closed under the similar circumstances (cf. \citep{rosenfeld2019occupation}). Thus, $A_f$ is a closed operator for RKHSs consisting of continuously differentiable functions. Consequently, the adjoints of densely defined Liouville operators are themselves densely defined (cf. \citep{pedersen2012analysis}).

Associated with Liouville operators in particular are a special class of functions within the domain of the Liouville operators' adjoints. The following definition and proposition were given in \citep{rosenfeld2019occupation}, and these generalize the framework of \citep{lasserre2008nonlinear} on occupation measures.
\begin{definition}\label{def:occ}Let $X \subset \mathbb{R}^n$ be compact, $H$ be a RKHS of continuous functions over $X$, and $\gamma:[0,T] \to X$ be a continuous signal. The functional $g \mapsto \int_0^T g(\gamma(\tau)) d\tau$ is bounded, and may be respresented as $\int_0^T g(\gamma(\tau)) d\tau = \langle g, \Gamma_{\gamma}\rangle_H,$ for some $\Gamma_{\gamma} \in H$ by the Riesz representation theorem. The function $\Gamma_{\gamma}$ is called the occupation kernel corresponding to $\gamma$ in $H$.\end{definition}

\begin{proposition}\label{prop:occupation}
Let $H$ be a RKHS of continuously differentiable functions over a compact set $X$, and suppose that $f:\mathbb{R}^n \to \mathbb{R}^n$ is Lipschitz continuous. If $\gamma:[0,T] \to X$ is a trajectory 
that satisfies $\dot \gamma = f(\gamma)$, then $\Gamma_{\gamma} \in \mathcal{D}(A_f^*)$, and $A_f^* \Gamma_{\gamma} = K(\cdot,\gamma(T)) - K(\cdot,\gamma(0)).$
\end{proposition}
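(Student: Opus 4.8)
The plan is to verify the defining property of the adjoint directly: a vector $h \in H$ lies in $\mathcal{D}(A_f^*)$ precisely when the linear functional $g \mapsto \langle A_f g, h\rangle_H$, defined on $\mathcal{D}(A_f)$, is bounded, in which case $A_f^* h$ is its Riesz representative. So I would fix an arbitrary $g \in \mathcal{D}(A_f)$ and compute $\langle A_f g, \Gamma_\gamma\rangle_H$, aiming to show it equals $\langle g, K(\cdot,\gamma(T)) - K(\cdot,\gamma(0))\rangle_H$. Since the right-hand side is manifestly a bounded functional of $g$ with representative $K(\cdot,\gamma(T)) - K(\cdot,\gamma(0)) \in H$, both conclusions of the proposition follow at once.

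First I would apply the defining identity of the occupation kernel from Definition~\ref{def:occ}, but with $A_f g$ in place of $g$. This is legitimate because $g \in \mathcal{D}(A_f)$ guarantees $A_f g = \nabla_x g \cdot f \in H$, whence $\langle A_f g, \Gamma_\gamma\rangle_H = \int_0^T (A_f g)(\gamma(\tau))\, d\tau = \int_0^T \nabla_x g(\gamma(\tau)) \cdot f(\gamma(\tau))\, d\tau$. The key step is then to invoke the trajectory hypothesis $\dot\gamma = f(\gamma)$ to replace $f(\gamma(\tau))$ by $\dot\gamma(\tau)$, turning the integrand into $\nabla_x g(\gamma(\tau)) \cdot \dot\gamma(\tau) = \frac{d}{d\tau}\, g(\gamma(\tau))$ via the chain rule. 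The fundamental theorem of calculus then collapses the integral to $g(\gamma(T)) - g(\gamma(0))$, and two applications of the reproducing property rewrite this as $\langle g, K(\cdot,\gamma(T)) - K(\cdot,\gamma(0))\rangle_H$, completing the computation.

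The obstacles here are purely matters of regularity, and I expect them to be mild. To apply the chain rule and the fundamental theorem of calculus I need $\tau \mapsto g(\gamma(\tau))$ to be continuously differentiable; this follows because $H$ consists of continuously differentiable functions and $\gamma$ is $C^1$ as a solution of $\dot\gamma = f(\gamma)$ with $f$ continuous. Compactness of $X$, together with continuity of $g$, $\nabla_x g$, $f$, and $\gamma$, ensures the integrand is continuous on $[0,T]$ so every integral above is well defined. The one point worth stating carefully is that the identity $\langle A_f g, \Gamma_\gamma\rangle_H = g(\gamma(T)) - g(\gamma(0))$ holds for \emph{every} $g \in \mathcal{D}(A_f)$, which is exactly what makes $g \mapsto \langle A_f g, \Gamma_\gamma\rangle_H$ agree on $\mathcal{D}(A_f)$ with the bounded functional represented by $K(\cdot,\gamma(T)) - K(\cdot,\gamma(0))$; this simultaneously establishes $\Gamma_\gamma \in \mathcal{D}(A_f^*)$ and identifies $A_f^*\Gamma_\gamma$.
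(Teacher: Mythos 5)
Your proof is correct and is essentially the argument intended by the paper, which states Proposition~\ref{prop:occupation} without proof (deferring to \cite{rosenfeld2019occupation}) but carries out exactly this computation --- occupation-kernel identity, chain rule, fundamental theorem of calculus, reproducing property --- for the OKHS analogue in equations \eqref{eq:liouville-occ-relation} and \eqref{eq:adjoint-relation}. Your closing remark correctly isolates the only delicate point, namely that the identity must hold for \emph{all} $g \in \mathcal{D}(A_f)$ so that the functional $g \mapsto \langle A_f g, \Gamma_\gamma\rangle_H$ is represented by an element of $H$, which is precisely the criterion for membership in $\mathcal{D}(A_f^*)$.
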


Proposition \ref{prop:occupation} thus integrates nonlinear dynamical systems with RKHSs through the Liouville operator. In particular, the action of the adjoint of the Liouville operator acting on an occupation kernel expressed as a difference of kernels, enables continuous time DMD analyses that do not require an a priori discretization of the dynamical system \citep{rosenfeldDMD}.

\subsection{Time Fractional Dynamical Systems}

Time fractional derivatives, such as the Riemann-Liouville fractional derivative and the Caputo fractional derivative, are defined with respect to two operators: integer order derivatives and the Riemann-Liouville fractional integral given as $J^q \gamma(t) := \frac{1}{\Gamma(q)} \int_0^T (t-\tau)^{q-1} \gamma(\tau) d\tau.$ The Riemann-Liouville fractional derivative of order $0 < q < 1$ of a function $\gamma$ is then given as $\frac{d}{dt} J^{1-q} \gamma(t)$, while the Caputo fractional derivative of order $0 < q < 1$ is given as $D_*^q \gamma(t) := J^{1-q} \gamma'(t) = \frac{1}{\Gamma(1-q)} \int_0^t (t-\tau)^{-q} \gamma'(\tau) d\tau$. Initial value problems for the Caputo fractional derivative may be expressed as \begin{equation}\label{eq:caputo-dynamics}D_*^q x = f(x) \text{ with } x(0) = x_0,\end{equation} and their solution can be written in terms of a Volterra operator as $x(t) = x_0 + \frac{1}{\Gamma(q)} \int_0^t (t-\tau)^{q-1} f(\tau) d\tau.$ Riemann-Liouville based initial value problems require initial values with respect to fractional derivatives, which are difficult to express for a physical system \citep{diethelm2010analysis}. Hence, the present manuscript will focus on dynamical systems arising from the Caputo fractional derivative.

\section{Occupation Kernel Hilbert Spaces}\label{sec:okhs}

To provide for a nonlocal Hilbert space framework, the objective of Section \ref{sec:okhs} is to develop further the concept of occupation kernels. At this point, occupation kernels have been viewed as a part of a RKHS rather than as a generator of a Hilbert space on their own. In what follows, this will remain true to some extent, where many OKHSs will arise from RKHSs. However, the central objects of study will be bounded functionals that come from trajectories. The focus on trajectories rather than points in $\mathbb{R}^n$ allows for the treatment of operators on nonlocal information. Section \ref{sec:okhs} formalizes the necessary Hilbert space framework, which will enable the development of DMD analyses of (time) fractional order dynamical systems.

\begin{definition}\label{def:okhs}Let \bedit{$q>0$}. For fixed $d,e \in \mathbb{N}$ with $e < d$, let $\mathcal{X} = \cup_{T > 0} C^d([0,T],\mathbb{R}^n)$ and $\mathcal{Y} = \cup_{T > 0} C^e([0,T],\bedit{\mathbb{R}})$. The OKHS of order $q$, $H$, over $\mathcal{X}$ is a Hilbert space that consists of functions $g : \mathcal{X} \to \mathcal{Y}$ such that \bedit{for each $\gamma\in \mathcal{X}$ and $g[\gamma]: [0,T] \to \mathbb{R}$ in $\mathcal{Y}$ the functional $g \mapsto J^{q}g[\gamma] = C_q \int_0^{T} (T-t)^{q-1} g[\gamma](t) dt$} is bounded, with $g[\gamma] \in Y$ indicating the mapping of $\gamma$ from $\mathcal{X}$ to $\mathcal{Y}$ by $g \in H$.\end{definition}
For each such functional, the Riesz representation theorem yields a function, $\Gamma_{\gamma,q} \in H$, such that $\langle g, \Gamma_{\gamma,q} \rangle_H = \frac{1}{\Gamma(q)} \int_0^{T_{\gamma}} (T_{\gamma}-t)^{q-1} g[\gamma](t) dt,$ this function is called the occupation kernel corresponding to $H$.

\subsection{OKHSs from RKHSs}

The existence of OKHSs follows immediately from that of RKHSs. 
\bedit{
\begin{definition}
Let $\mathcal{X} = \cup_{T > 0} C^d([0,T],\mathbb{R}^n)$ and $\mathcal{Y} = \cup_{T > 0} C^e([0,T],\mathbb{R}^m)$ and $\mathcal{V}$ be a vector space of infinitely differentiable functions $g:\mathbb{R}^n\rightarrow\mathbb{R}^m$. For each $g\in \mathcal{V}$ let $\phi_g : \mathcal{X} \to \mathcal{Y}$ be the mapping that takes $\gamma \in \mathcal{X}$ to $\phi_g[\gamma] := g(\gamma(\cdot))\in \mathcal{Y}$. Let $\mathcal{W}$ be the vector space of maps $\varphi:\mathcal{X}\rightarrow \mathcal{Y}$, under the standard operations. The operator, $\mathcal{T}:\mathcal{V}\rightarrow \mathcal{W}$ given by $\mathcal{T} g := \phi_g$, is called the canonical mapping of the scalar valued function $g$ to the signal valued function $\phi_g$.
\end{definition}
Note, since $g$ is infinitely differentiable, $\phi_g[\gamma]$ is at least as smooth as $\gamma$. Hence $\phi_g[\gamma] \in Y$.
}


\begin{theorem}
Let $H_{RKHS}$ be an RKHS of infinitely differentiable \bedit{scalar-valued} functions over $\mathbb{R}^n$ with kernel function $K$. The space, $\mathcal{T}H_{RKHS} := \{ \mathcal{T}g : g\in H_{RKHS}\}$ is an OKHS of any order $q > 0$ where the inner product is taken to be $\langle \phi, \psi \rangle_{OKHS} = \langle \mathcal{T}^{-1} \phi, \mathcal{T}^{-1} \psi \rangle_{RKHS}.$
\end{theorem}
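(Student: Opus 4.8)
The plan is to verify the three defining features of an OKHS of order $q$ from Definition~\ref{def:okhs} in turn: that $\mathcal{T}H_{RKHS}$ is a Hilbert space under the stated inner product, that each of its elements is a map from $X$ to $Y$, and that for every fixed signal $\gamma$ the functional $\phi \mapsto J^{q}\phi[\gamma]$ is bounded. The second point is immediate from the remark following the definition of $\mathcal{T}$: since each $g \in H_{RKHS}$ is infinitely differentiable, $\phi_g[\gamma] = g \circ \gamma$ is at least as smooth as $\gamma$, hence lies in $C^e \subset Y$.

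First I would establish that the canonical map $\mathcal{T}$ is injective, so that $\mathcal{T}^{-1}$, and therefore the inner product $\langle \phi,\psi\rangle_{OKHS} = \langle \mathcal{T}^{-1}\phi, \mathcal{T}^{-1}\psi\rangle_{RKHS}$, is well defined on $\mathcal{T}H_{RKHS}$. If $\phi_g = \phi_h$, then $g(\gamma(t)) = h(\gamma(t))$ for every signal $\gamma$ and every $t$; evaluating on the constant signals $\gamma(t) \equiv x$, which lie in $X$ because they are infinitely differentiable, forces $g(x) = h(x)$ for all $x \in \mathbb{R}^n$, so $g = h$. With injectivity in hand, $\mathcal{T}$ is a linear bijection of $H_{RKHS}$ onto $\mathcal{T}H_{RKHS}$, and by construction it is an isometry for the pullback inner product. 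Since $H_{RKHS}$ is complete, completeness transfers through the isometric isomorphism and $\mathcal{T}H_{RKHS}$ is a Hilbert space.

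The substance of the argument is the boundedness of the functional $L_\gamma(\phi) := J^{q}\phi[\gamma]$. Writing $\phi = \mathcal{T}g$ and invoking the reproducing property $g(\gamma(t)) = \langle g, K(\cdot,\gamma(t))\rangle_{RKHS}$, I would recognize
\[
L_\gamma(\mathcal{T}g) = C_q \int_0^{T_\gamma}(T_\gamma - t)^{q-1}\langle g, K(\cdot,\gamma(t))\rangle_{RKHS}\,dt = \langle g, \Gamma_{\gamma,q}\rangle_{RKHS},
\]
where $\Gamma_{\gamma,q} := C_q \int_0^{T_\gamma}(T_\gamma - t)^{q-1}K(\cdot,\gamma(t))\,dt$ is the order-$q$ occupation kernel, \emph{provided} this Bochner integral converges in $H_{RKHS}$ and the interchange of integral and inner product is justified.

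The crux, and the step I expect to be the main obstacle, is showing that $\Gamma_{\gamma,q}$ is a genuine element of $H_{RKHS}$. The map $t \mapsto K(\cdot,\gamma(t))$ is continuous into $H_{RKHS}$, since $\|K(\cdot,x)-K(\cdot,y)\|_{RKHS}^2 = K(x,x)-2K(x,y)+K(y,y)$ with $K$ continuous and $\gamma$ continuous, so it is strongly measurable and separably valued; its norm $\sqrt{K(\gamma(t),\gamma(t))}$ is bounded on $[0,T_\gamma]$ because $\gamma([0,T_\gamma])$ is compact. Crucially, the weight satisfies $\int_0^{T_\gamma}(T_\gamma-t)^{q-1}\,dt = T_\gamma^{q}/q < \infty$ \emph{precisely because} $q>0$, so the integrand is Bochner integrable and $\Gamma_{\gamma,q} \in H_{RKHS}$. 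The Bochner integral then legitimizes pulling the integral inside the inner product, yielding $|L_\gamma(\mathcal{T}g)| \le \|\Gamma_{\gamma,q}\|_{RKHS}\,\|g\|_{RKHS} = \|\Gamma_{\gamma,q}\|_{RKHS}\,\|\mathcal{T}g\|_{OKHS}$, which is the required boundedness. As a byproduct, the representing element of $L_\gamma$ in $\mathcal{T}H_{RKHS}$ is $\mathcal{T}\Gamma_{\gamma,q}$, exhibiting the occupation kernel of the OKHS explicitly and tying the construction back to the Riesz representation promised after Definition~\ref{def:okhs}.
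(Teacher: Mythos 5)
Your proposal is correct and follows the same skeleton as the paper's proof: well-definedness of $\phi_g$ as a map from $X$ to $Y$ via smoothness, injectivity of $\mathcal{T}$ by testing on constant signals, completeness inherited from $H_{RKHS}$ through the isometry, and boundedness of the functional $\phi \mapsto J^q\phi[\gamma]$ as the substantive step. The one place you diverge is in how that last bound is obtained. The paper argues pointwise: it writes $|g(\gamma(\tau))| = |\langle g, K(\cdot,\gamma(\tau))\rangle_{RKHS}| \le \|g\|_{RKHS}\sqrt{K(\gamma(\tau),\gamma(\tau))}$ and then integrates this scalar bound against the weight $(T-\tau)^{q-1}$, using compactness of $\gamma([0,T])$ and integrability of the singularity. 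You instead assemble the occupation kernel $\Gamma_{\gamma,q}$ as a Bochner integral in $H_{RKHS}$ and apply Cauchy--Schwarz once at the Hilbert-space level. Both are valid and yield the same constant; your route requires the extra (routine) verification of strong measurability and Bochner integrability, but buys the explicit Riesz representer as a byproduct --- something the paper establishes separately and by more elementary means in Lemma~\ref{kernellemma}, where existence comes from Riesz representation and the formula from evaluating against $K_x$. No gaps.
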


\begin{proof}
From the discussion before the theorem statement, $\phi_g$ is well defined as a map from $X$ to $Y$ for any $g \in H_{RKHS}$. To demonstrate that $H_{OKHS} := \mathcal{T} H_{RKHS}$ is an occupation kernel Hilbert space, it must be demonstrated that the space is complete with respect to the norm induced by the inner product and that for any $q > 0$, the collection of functionals in Definition \ref{def:okhs} are bounded.

Note that $\mathcal{T}$ is linear, and hence $H_{OKHS}$ is a vector space. That $\mathcal{T}$ is injective follows from the consideration of constant signals. If $g_1,g_2 \in H_{RKHS}$ are distinct functions, then there is a point $x \in \mathbb{R}^n$ such that $g_1(x) \neq g_2(x)$. Considering $\theta_x(t) \equiv x$ for $t\in [0,1]$ which resides in $X$ given in Definition \ref{def:okhs}, it follows that $\phi_{g_1}[\theta_x](t) = g_1(\theta_x(t)) = g_1(x) \neq g_2(x) = \phi_{g_2}[\theta_x](t)$ for all $t$.

Once injectivity is established, the inner product on $H_{OKHS}$, given as \[\langle \phi, \psi \rangle_{OKHS} := \langle \mathcal{T}^{-1} \phi, \mathcal{T}^{-1} \psi \rangle_{RKHS},\] is well defined. Moreover, $\mathcal{T}$ is automatically continuous with respect to the induced norm on $H_{OKHS}$.

To demonstrate completeness, suppose that $\{ \phi_m \}_{m=1}^\infty \subset \mathcal{T} H_{RKHS}$ is Cauchy with respect to the norm induced by the inner product on $H_{OKHS}$. For each $m$, there is a function $g_m \in H_{RKHS}$ such that $\phi_{m} = \mathcal{T} g_m$. Given any $\epsilon > 0$, there is an $M$ such that for all $m,m' > M$, the following holds,
\begin{gather*}
\| \phi_m - \phi_{m'} \|_{OKHS}^2 = \sqrt{ \langle \phi_m - \phi_{m'}, \phi_m - \phi_{m'} \rangle_{OKHS}} \\
\sqrt{ \langle g_m - g_{m'}, g_m - g_{m'} \rangle_{RKHS}} = \| g_m - g_{m'} \|^2_{RKHS}.
\end{gather*}
Hence, $g_m$ is a Cauchy sequence in $H_{RKHS}$, and there is a function $g \in H_{RKHS}$ such that $g_m \to g$. The continuity of the canonical identification yields $\phi_{g_m} \to \phi_g$, and thus the limit of $\phi_m$ resides in $\mathcal{T} H_{RKHS}$. Therefore, $\mathcal{T} H_{RKHS}$ is complete and $\mathcal{T} H_{RKHS} = H_{OKHS}.$

To demonstrate the boundedness of the functional $\phi \mapsto C_q \int_0^{T} (T-\tau)^{q-1} \phi[\gamma](\tau) d\tau$ for each $\gamma \in X$ and $q > 0$, let $g = \mathcal{T}^{-1}\phi$. It follows that
\begin{gather*}
    \left| C_q \int_0^{T} (T-\tau)^{q-1} \phi[\gamma](\tau)d\tau \right| = \left| C_q \int_0^{T} (T-\tau)^{q-1} g(\gamma(\tau)) d\tau\right| \\
    \le \| g\|_{RKHS} \cdot C_q \int_0^{T} (T-\tau)^{q-1} \sqrt{K(\gamma(\tau),\gamma(\tau))} d\tau\\
    =\| \phi \|_{OKHS} \cdot C_q \int_0^{T} (T-\tau)^{q-1} \sqrt{K(\gamma(\tau),\gamma(\tau))} d\tau.
\end{gather*}
As $K$ is a continuous function on $\mathbb{R}^n$ and $\gamma([0,T])$ is compact in $\mathbb{R}^n$, it follows that the last integral is bounded for any \bedit{$q>0$}. Note that for $0 < q < 1$, the quantity $(T-\tau)^{q-1}$ has an integrable singularity at $\tau = T$.
\end{proof}
\bedit{
\begin{remark}
The operator $\mathcal{T}:H_{RKHS}\rightarrow H_{OKHS}$ is an isometric isomorphism from the base reproducing kernel Hilbert space to the constructed occupation kernel Hilbert space. 

\end{remark}
\begin{remark}\label{sametimeframe}
In general, if $g$ is arbitrary signal valued function, then $g[\gamma]$ and $\gamma$ are not necessarily defined over the same interval. However, if $H$ is an OKHS arising from an RKHS under the canonical mapping, then given a $g\in H$ and $\gamma:[0,T]\rightarrow\mathbb{R}^n$ it follows that $g[\gamma]$ is also defined over $[0,T]$.
\end{remark}

}





\section{Nonlocal Liouville Operators}\label{sec:nonlocal-ops}

The objective of this manuscript is to generalize existing methods for integer order dynamical systems for system identification and DMD analysis to that of fractional order dynamical systems of Caputo type. This section introduces two fractional order generalizations of the Liouville operator, which will be employed in the sequel for DMD analysis.

 \begin{definition}Let $\mathcal{D}(\mathcal{A}_{f,q}) := \{ \varphi \in H^{q}_{OKHS} : J^{1-q}\left(\nabla \varphi\cdot D^{1-q}f \right)\in H^q_{OKHS} \}$. The \emph{fractional} Liouville operator, $\mathcal{A}_{f,q}: \mathcal{D}(\mathcal{A}_{f,q}) \to H^q_{OKHS}$, corresponding to \eqref{eq:caputo-dynamics} over $H^q_{OKHS}$ is given as $\mathcal{A}_{f,q} \varphi[\gamma](t) = \frac{1}{\Gamma(q)} \int_0^{t} (t - \tau)^{-q} \nabla \varphi(\gamma(\tau)) \cdot D^{1-q} f(\gamma(\tau)) d\tau$.\end{definition}





The significance of the definition of this fractional Liouville operator manifests through the examination of the performance of its eigenfunctions on a trajectory, $\gamma$, satisfying \eqref{eq:caputo-dynamics}. 
Suppose that $\phi_{h} \in \tilde H_{OKHS}$ is an eigenfunction for ${\mathcal{A}}_{f,q}$ with eigenvalue $\lambda$, then
\begin{gather}\label{eq:eigenfunction-relation-fracliouville} D_*^q \phi_{h}[\gamma](t) = C_q \int_0^{t} (t - \tau)^{-q} \nabla h(\gamma(\tau)) \dot \gamma(\tau) d\tau\\
= C_q \int_0^{t} (t - \tau)^{-q} \nabla h(\gamma(\tau)) D^{1-q} f(\gamma(\tau)) d\tau = {\mathcal{A}}_{f,q}\phi_{h}[\gamma](t) = \lambda \phi_{h}[\gamma](t).\nonumber \end{gather}
Hence, $\phi_{h}[\gamma](t) = \phi_{h}[\gamma](0) E_q(\lambda t^q),$ where $E_q$ is the Mittag-Leffler function of order $q$, given as $E_q(t) := \sum_{m=0}^\infty \frac{t^m}{\Gamma(qm+1)}.$

The decompositions of this operator will leverages Mittag-Leffler functions. Specifically, the decomposition of the dynamics given through the fractional Liouville operator, $\mathcal{A}_{f,q}$, will be a linear combination of Mittag-Leffler functions, which are likely to yield better predictions for fractional order dynamical systems, given the intimate relationship between fractional derivatives and Mittag-Leffler functions.


Similar to the case of the integer order Liouville operator, the fractional Liouville operator interacts nicely with occupation kernels from their respective spaces. In particular, 
\begin{equation}\label{eq:fracliouville-occ-relation}\langle \mathcal{A}_{f,q} \phi_h, \Gamma_{\gamma} \rangle_{\tilde H_{OKHS}} = \phi_h[\gamma](T) - \phi_h[\gamma](0), \end{equation}
where $\Gamma_{\gamma}$ is the occupation kernel corresponding to $\tilde H_{OKHS}.$ At this point, it is important to note that each OKHS has ``bounded point evaluations'' through constant trajectories, $\alpha_{x_0} : [0,1] \to \mathbb{R}^n$ where $\alpha_{x_0}(t) = x_0$. Point evaluation is then expressed as $\langle \phi_g, \Gamma_{\alpha_{x_0}} \rangle_{\tilde H_{OKHS}} = C_q \int_0^1 (1-t)^{q-1} \phi_g[\alpha_{x_0}](t) dt = C_q \int_0^1 (1-t)^{q-1} g(x_0) dt = \frac{C_q}{q} g(x_0) = \frac{C_q}{q} \phi_g[\alpha_{x_0}](0).$ Combining this result with the inner product relations yields
\begin{equation}\mathcal{A}_{f,q}^* \Gamma_{\gamma} = C_q \left(\Gamma_{\alpha_{\gamma(T)}} - \Gamma_{\alpha_{\gamma(0)}}\right).\end{equation}
These relations will be pivotal in the development of a DMD method for fractional order systems.

\section{Finite Rank Representations of Liouville Operators}\label{sec:finite-rank}

For $q > 0$, the objective of this section is to leverage observed trajectories, $\mathcal{M} = \{ \gamma_{i} : [0,T_i] \to \mathbb{R}^n \}_{i=1}^M$, that satisfy $D_*^q \gamma_i = f(\gamma_i)$ for each $i=1,\ldots,M$ and an unknown $f : \mathbb{R}^n \to \mathbb{R}^n$ to derive a model, $G:[0,T]\to\mathbb{R}^n$, for the system for which given an initial value $x(0) \in \mathbb{R}^n$ the trajectory $x:[0,T]\to\mathbb{R}^n$ satisfying $D_*^q x = f(x)$ can be estimated as $x(t) \approx G(t).$ To obtain this model, a finite rank representation of the operator $\mathcal{A}_{f,q}$ over a given OKHS, $H$, will be determined through the associated occupation kernels and the relations established in Section \ref{sec:nonlocal-ops}. The finite rank representation of the operator will then be leveraged as a proxy for the actual densely defined operator, where a spectral decomposition will be determined and the eigenfunctions will form the foundation of the data driven model. In practice, the eigenfunctions will be determined as a linear combination of the associated occupation kernels. Note that the following construction assumes that the occupation kernels are in the domain of the fractional Liouville operator.

\subsection{Finite Rank Representation of the Liouville Operator,  $\mathcal{A}_{f,q}$}

Let $H$ be an OKHS of order $q$ determined through a RKHS, $\tilde H$, of continuously differentiable functions over $\mathbb{R}^n$ with kernel $K$. Let $\beta$ be the ordered basis of occupation kernels corresponding to $\mathcal{M}$ given as $\beta := \{ \Gamma_{\gamma_i,q} \}_{i=1}^M$. The vector space $\vspan \beta$ is a finite dimensional subspace of $H$ and hense, closed. Let $P_{\beta}$ denote the projection operator from $H$ onto $\vspan \beta$. As demonstrated in Section \ref{sec:nonlocal-ops}, each occupation kernel corresponding to $\gamma_i$ is in the domain of the adjoint of the operator $\mathcal{A}_{f,q}$. Hence, the operator $P_{\beta} \mathcal{A}_{f,q} P_{\beta}$ is well defined over $H$, and this operator is of finite rank. Note that the matrix, $[P_{\beta} \mathcal{A}_{f,q} P_{\beta}]_{\beta}^\beta$, defined in terms of the ordered basis $\beta$ may be expressed simply as $[P_{\beta} \mathcal{A}_{f,q} ]_{\beta}^\beta$, since the domain of definition for the matrix is $\vspan \beta$ and $P_{\beta} \phi_h = \phi_h$ for all $\phi_h \in \vspan \beta$.

For a function $\phi_h \in H$, the projection of $H$ onto $\vspan \beta$ may be written as $P_{\beta} \phi_h = \sum_{i=1}^M w_i \Gamma_{\gamma_i,q}$, where $w = (w_1,\ldots, w_m)^T \in \mathbb{R}^M$ is obtained by solving the matrix equation
\begin{equation} \label{eq:projection_equation}
    \begin{pmatrix}
        \langle \Gamma_{\gamma_1,q}, \Gamma_{\gamma_1,q} \rangle_H & \cdots & \langle \Gamma_{\gamma_M,q}, \Gamma_{\gamma_1,q} \rangle_H\\
        \vdots & \ddots & \vdots\\
        \langle \Gamma_{\gamma_1,q}, \Gamma_{\gamma_M,q} \rangle_H & \cdots & \langle \Gamma_{\gamma_M,q}, \Gamma_{\gamma_M,q} \rangle_H
    \end{pmatrix}
    \begin{pmatrix}
        w_1 \\ \vdots \\ w_M
    \end{pmatrix}
    =
    \begin{pmatrix}
        \langle \phi_h, \Gamma_{\gamma_1,q} \rangle_H\\
        \vdots \\
        \langle \phi_h, \Gamma_{\gamma_M,q} \rangle_H
    \end{pmatrix}.
\end{equation}


For the fractional Liouville operator, $\mathcal{A}_{f,q}$, the finite rank representation is given as
\begin{gather} \label{eq:finite-rank-fracliouille}
    [P_{\beta}\mathcal{A}_{f,q}^*]_{\beta}^\beta = \begin{pmatrix}
        \langle \Gamma_{\gamma_1,q}, \Gamma_{\gamma_1,q} \rangle_H & \cdots & \langle \Gamma_{\gamma_M,q}, \Gamma_{\gamma_1,q} \rangle_H\\
        \vdots & \ddots & \vdots\\
        \langle \Gamma_{\gamma_1,q}, \Gamma_{\gamma_M,q} \rangle_H & \cdots & \langle \Gamma_{\gamma_M,q}, \Gamma_{\gamma_M,q} \rangle_H
    \end{pmatrix}^{-1} \times\\
    \begin{pmatrix}
         \langle \mathcal{A}_{f,q} \Gamma_{\gamma_1,q}, \Gamma_{\gamma_1,q} \rangle_H & \ldots & \langle \mathcal{A}_{f,q} \Gamma_{\gamma_M,q},\Gamma_{\gamma_1,q} \rangle_H\\
        \vdots \\
        \langle \mathcal{A}_{f,q} \Gamma_{\gamma_1,q}, \Gamma_{\gamma_M,q} \rangle_H & \ldots & \langle \mathcal{A}_{f,q} \Gamma_{\gamma_M,q},\Gamma_{\gamma_M,q} \rangle_H
    \end{pmatrix},\nonumber
\end{gather}
where each entry can be computed as
\begin{gather*}
\langle \mathcal{A}_{f,q} \Gamma_{\gamma_i,q}, \Gamma_{\gamma_j,q} \rangle_H = \langle  \Gamma_{\gamma_i,q}, \mathcal{A}_{f,q}^* \Gamma_{\gamma_j,q} \rangle_H = \langle \Gamma_{\gamma_i,q}, \Gamma_{\alpha_{\gamma_j(T)},q} - \Gamma_{\alpha_{\gamma_j(0)},q} \rangle_H\\
= C_q \int_{0}^T (T-\tau)^{q-1} \left( K(\gamma_{i}(\tau),\gamma_j(T)) -  K(\gamma_{i}(\tau),\gamma_j(0))\right) d\tau.
\end{gather*}

The entries of the Gram matrix in \eqref{eq:finite-rank-fracliouille} can be expressed as 
\begin{gather*}
\langle \Gamma_{\gamma_i,q}, \Gamma_{\gamma_j,q} \rangle_H
= (C_q)^2 \int_{0}^T (T-\tau)^{q-1}(T-t)^{q-1} K(\gamma_{j}(\tau),\gamma_i(t)) d\tau dt,
\end{gather*}
with $q$ matching the order of the system for the fractional Liouville operator.

\section{Dynamic Mode Decompositions with Liouville Modes and Fractional Liouville Modes}

In Dynamic Mode Decomposition (DMD) methods for the data driven analysis of dynamical systems, the identity function (also called the \emph{full state observable}), $g_{id} : \mathbb{R}^n \to \mathbb{R}^n$, given as $g_{id}(x) = x$ is individually decomposed with respect to an eigenbasis in a RKHS corresponding to finite rank representations of Koopman and Liouville operators similar to those determined in Section \ref{sec:finite-rank}. The result is a linear combination of scalar valued eigenfunctions multiplied by a collection of vectors obtained through the projection of the component of $g_{id}$ onto the eigenbasis. These vectors are called the \emph{dynamic modes} of the system. When connected with particular operators, they are also called Koopman or Liouville modes. Subsequently, a model for the dynamical system is determined by exploiting certain features of the eigenfunctions, which ultimately replace the eigenfunctions with exponential functions.

This section discusses the two different models that are determined through a choice of using the Liouville operator or the fractional Liouville operator over a OKHS. In place of the identity function, whose image is in $\mathbb{R}^n$, the DMD method presented here leveraged the signal valued analogue, $\phi_{g_{id}}$, and exploits identities \eqref{eq:finite-rank-fracliouille}.

From the data driven perspective, the Liouville operator and Fractional Liouville operator for a particular collection of trajectories is not directly accessible. This motivates the use of finite rank representations such as those given in Section \ref{sec:finite-rank}. An eigenvector, $v = (v_1,\ldots,v_M)^T \in \mathbb{C}^M$, with eigenvalue $\lambda \in \mathbb{C}$ obtained from \eqref{eq:finite-rank-fracliouille} corresponds to a function in the OKHS as $\varphi = \frac{1}{\sqrt{v^T G v}} \sum_{i=1}^M v_i \Gamma_{\gamma_i,q}$ with $q$ as the fractional order of the system in \eqref{eq:finite-rank-fracliouille}, which is an eigenfunction for the corresponding finite rank operator. To derive a model for the dynamical system, the eigenfunctions obtained from the finite rank operators will be used in place of proper eigenfunctions of the fractional Liouville operator. 

Specifically, suppose that the eigenfunctions $\{ \varphi_{i} \}_{i=1}^M$ corresponding to the eigenvalues $\{ \lambda_i \}$ diagonalize the finite rank operator $P_{\beta} \mathcal{A}_{f,q} P_{\beta}.$ The full state observable for the OKHS, $\phi_{g_{id}}$, may then be approximated through a projection as $\phi_{g_{id}} \approx \sum_{i=1}^M \xi_i \varphi_i$, where
\[ (\xi_1 \cdots \xi_M)^T := \left( W^{-1} \begin{pmatrix} \langle (\phi_{g_{id}})_{j},\varphi_1 \rangle_H & \cdots & \langle (\phi_{g_{id}})_j,\varphi_M \rangle_H\end{pmatrix}^T \right)_{j=1}^n \in \mathbb{C}^{m \times n}\]
are the fractional Liouville modes, and $W = ( \langle \varphi_i,\varphi_j \rangle_H )_{i,j=1}^M.$ Note that the inner products are taken one dimension at a time, and this is because the image of the functions $\varphi_i$ are scalar valued signals. By taking the inner product one dimension at a time. This is equivalent to working over a vector valued version of OKHSs, but without the added theoretical overhead. This decomposition of the full state observable allows the expression of an approximate model for the state, $D_{*}^q x(t) = f(x(t))$, by leveraging \eqref{eq:eigenfunction-relation-fracliouville} as
Using \eqref{eq:eigenfunction-relation-fracliouville}, the resultant model for the dynamical system is given as
\[ x(t) = \phi_{g_{id}}[x](t) \approx \sum_{i=1}^M  \xi_i \varphi_i[x](t) \approx \sum_{i=1}^M  \xi_i \varphi_i[x](0) E_q(\lambda_i t^q), \]
with $\xi_i$ being the $i$-th fractional Liouville mode. Hence, a model for the dynamical system has been derived from the observed trajectories. 



\section{Discussion}


Despite the theoretical developments in the manuscript, the actual implementation of the DMD method does not differ greatly from standard occupation kernel DMD. As seen in Section \ref{sec:finite-rank}, the computations ultimately are performed using the kernel of the underlying RKHS rather than directly on the OKHS. Hence, the computation of Gram matrices only need a slight adjustment to accommodate the fractional integrals. In fact, these computations are exactly in line with \citep{li2020fractional}, and follow from Newton-Cotes and Gaussian quadrature methods.

Finally, the fractional order framework can also be posed over signal valued RKHSs \citep{rosenfeld2021theoretical}, which gives another approach to handling nonlocal data interpretation. OKHSs differ in that the definition of OKHSs are independent of that of RKHSs, where the RKHSs are used in this manuscript as a particular realization of OKHSs. Moreover, while signal valued spaces correspond to trajectories of a fixed length (and may artificially adjust the lengths of trajectories using indicator functions), OKHSs allow for arbitrarily long trajectories natively.

One drawback of this method is in the reconstruction. It was observed by the authors that the reconstruction of time series data that was integer order was obtainable for short timesteps, the major limitation was in the evaluation of the Mittag-Leffler functions themselves, where efficient routines for high accuracy computations do not match the computational efficiency of the exponential functions. This is a major drawback, which will be ameliorated as the fractional order community continues on improving numerical methods for the evaluation of Mittag-Leffler functions.

\section{Conclusion}

This manuscript presented the Hilbert space framework of OKHSs for the analysis of nonlocal operators, such as the fractional Liouville operator. OKHSs are a natural extension of RKHSs, and it was shown that an OKHS may be obtained from a suitable RKHS. As an example of the flexibility of this framework, a DMD routine was developed for a Caputo time-fractional dynamical system, where the resultant algorithm is remarkably close to that of occupation kernel DMD methods for integer order systems.

\begin{ack}
This research was supported by the Air Force Office of Scientific Research (AFOSR) under contract numbers FA9550-20-1-0127 and FA9550-21-1-0134, and the National Science Foundation (NSF) under award 2027976. Any opinions, findings and conclusions or recommendations expressed in this material are those of the author(s) and do not necessarily reflect the views of the sponsoring agencies.
\end{ack}





\bibliographystyle{plainnat}
\bibliography{references,extrarefs}

\end{document}